\newtheorem{teo}{Theorem}
\newtheorem{lemma}[teo]{Lemma}
\newtheorem{proposition}[teo]{Proposition}
\newtheorem{de}[teo]{Definition}
\begin{document}

\noindent{\Large Abelian groups gradings on null-filiform and one-parametric filiform Leibniz algebras}
\footnote{
The work was supported by the PCI of the UCA `Teor\'\i a de Lie y Teor\'\i a de Espacios de Banach',
 the PAI with project numbers FQM298, FQM7156,
Ministerio de Econom\'ia y Competitividad (Spain), grant MTM2016-79661-P (AEI/FEDER, UE, support included)
and  MTM2013-41208P, 
the 2014-2020 ERDF Operational Programme and by the Department of Economy, Knowledge, Business and University of the Regional Government of Andalusia FEDER-UCA18-107643,
 RFBR 20-01-00030,  CNPq    	302980/2019-9.}

   \medskip

 \medskip
   {\bf
   Antonio Jesús Calder\'{o}n$^{a}$, Luisa María Camacho$^{b}$, Ivan Kaygorodov$^{c,d}$ $\&$
    Bakhrom Omirov$^{e}$

    \medskip
}

\medskip

{\tiny

$^{a}$ Dpto. Matem\'{a}ticas. Universidad de C\'{a}diz. 11510 Puerto Real, C\'{a}diz, Spain

$^{b}$ Dpto. Matem\'{a}tica Aplicada I. Universidad de Sevilla. Avda. Reina Mercedes, s/n. 41012 Sevilla, Spain

$^{c}$ CMCC, Universidade Federal do ABC, Santo Andr\'e - SP, Brazil

$^{d}$ Moscow Center for Fundamental and Applied Mathematics, Moscow, GSP-1, 119991, Russia

$^{e}$  National University of Uzbekistan, Institute of Mathematics Uzbekistan Academy of Sciences, Tashkent 100174, Uzbekistan

\medskip

   E-mail addresses:
   \smallskip

  Antonio Jesús Calder\'{o}n (ajesus.calderon@uca.es)

\smallskip
 Luisa María Camacho (lcamacho@us.es)

\smallskip
    Ivan Kaygorodov (kaygorodov.ivan@gmail.com)

\smallskip

Bakhrom Omirov (omirovb@mail.ru)

}

\medskip

\noindent{\bf Abstract:} We classify, up to equivalences,  all abelian groups gradings    on null-filiform and one-parametric filiform Leibniz algebras. Any grading on a null-filiform Leibniz algebra is toral but there are non-toral gradings on  one-parametric filiform Leibniz algebras.

\medskip

\noindent\textbf{MSC2020: 17A32, 	17B30, 17B40.}

\noindent\textbf{Key words:} Leibniz algebra, nilpotent algebra, grading, automorphism, torus.

\section{Introduction}

Gradings by abelian groups have played a key role in the study of Lie algebras and superalgebras, starting with the root space decomposition of the semisimple Lie algebras over the complex field, which is an essential ingredient in the Killing-Cartan classification of these algebras.
Gradings by a cyclic group appear in the connection between Jordan algebras and Lie algebras through the Tits-Kantor-Koecher construction, and in the theory of Kac-Moody Lie algebras.
Gradings by the integers or the integers modulo 2 are ubiquitous in Geometry.
Also there are some papers about non-group gradings \cite{c18,zus}.

In 1989, Patera and Zassenhaus \cite{pz89} began a systematic study of gradings by abelian groups on Lie algebras. They raised the problem of classifying the fine gradings, up to equivalence, on the simple Lie algebras over the complex
numbers. This problem has been settled now thanks to the work of many colleagues.
After that,  gradings of simple alternative and simple Malcev algebras \cite{elduque98},
the simple Kac Jordan superalgebra \cite{calderon10},
countless simple Lie algebras \cite{calderon14, draper12, draper14, draper06, draper09, draper10, draper16, elduque09, elduque10, elduque12, elduque13, elduque15} and
filiform Lie algebras \cite{bgr16} were described.

The concept of length of a Lie algebra was introduced by G\'{o}mez, Jim\'{e}nez-Merch\'{a}n and Reyes
in \cite{Gomez1, Gomez2}.  They distinguished an interesting family: algebras admitting a grading with
the greatest possible number of non-zero subspaces. Actually, the gradings with a large number of
non-zero subspaces enable us to describe the multiplication on the algebra more exactly.

In the past years, Leibniz algebras have been under active research (see, for example,
\cite{adashev17,Barnes, Nilradical, leib2,ikv,kppv,  Khudoyberdiyev13, Khudoyberdiyev14}). The main result on the structure of finite-dimensional Leibniz algebras asserts that a Leibniz algebra decomposes into a semidirect sum of a solvable radical and a semisimple Lie algebra \cite{Barnes}. Therefore, the main problem of the description of finite-dimensional Leibniz algebras consists of the study of solvable Leibniz algebras. Similarly to the case of Lie algebras the study of solvable Leibniz algebras is reduced to nilpotent ones \cite{Nilradical}.

Since the description of all $n$-dimensional nilpotent Leibniz algebras is an unsolvable task (even in the case of Lie algebras), we have to study nilpotent Leibniz algebras under certain conditions (conditions on index of nilpotency, various types of grading, characteristic sequence etc.) \cite{adashev17, Khudoyberdiyev13, Khudoyberdiyev14}. The well-known natural grading of nilpotent Lie and Leibniz algebras is very helpful when investigating of the properties of those algebras without restrictions on the grading. Indeed, we can always choose a homogeneous basis and thus the grading allows to obtain more explicit conditions for the structural constants. Moreover, such grading is useful for the investigation of cohomologies for the considered algebras, because it induces the corresponding grading of the group of cohomologies. Thus, it is very crucial to know what kind of grading admits a nilpotent Leibniz algebra.

In the present  paper we begin the study of gradings on Leibniz algebras by  classifying, up to equivalence,  of all abelian groups gradings of null-filiform and one-parametric filiform Leibniz algebras.

\section{Preliminaries}
In this section we give necessary definitions and preliminary results.

\begin{de} A  vector space with a bilinear multiplication $(L, \cdot)$ over the complex field   is called a Leibniz algebra if for any $x,y,z\in L$ the so-called Leibniz identity $$ x(yz) =(xy)z  - (xz)y$$  holds.
\end{de}


For a given Leibniz algebra $L$ the sequence of two-sided ideals defined recursively as follows:
$$L^1=L, \ L^{k+1}=L^kL,  \ k \geq 1,$$
is said to be {\it the lower central series of $L$}.

\begin{de} A Leibniz algebra $L$ is said to be
nilpotent, if there exists $n\in\mathbb N$  such that $L^{n}=0$.
The minimal number $n$ with such property is said to be the index of nilpotency of the algebra $L$.
\end{de}


\begin{de} An $n$-dimensional Leibniz algebra $L$ is said to be null-filiform if $\dim L^i=n+1-i, \ 1\leq i \leq n+1$.
\end{de}

\begin{teo}[\cite{ayupov01}]   An arbitrary complex $n$-dimensional null-filiform non-Lie Leibniz algebra is isomorphic to the algebra
\[NF_n: \quad e_ie_1=e_{i+1}, \quad 1 \leq i \leq n-1,\]
where $\{e_1, e_2, \dots, e_n\}$ is a basis of the algebra $NF_n$.
\end{teo}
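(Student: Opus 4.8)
The plan is to exhibit an explicit basis in which the multiplication coincides with that of $NF_n$. Since the null-filiform hypothesis gives $\dim L^1/L^2 = 1$, I would first fix an element $e_1 \in L \setminus L^2$ and set $e_{i+1} := [e_i, e_1]$ for $1 \le i \le n-1$; equivalently $e_{i+1} = \mathcal{R}_{e_1}^{\,i}(e_1)$. The whole argument then reduces to two claims: that $\{e_1, \dots, e_n\}$ is a basis, and that the only nonzero products among these vectors are the relations $[e_i, e_1] = e_{i+1}$.

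For the first claim I would record the auxiliary inclusion $[L^i, L^j] \subseteq L^{i+j}$, proved by induction on $j$ directly from the Leibniz identity $[x,[y,z]] = [[x,y],z] - [[x,z],y]$ (the base case $j=1$ being the definition of the lower central series). Using this with $j=2$, together with the fact that $e_1$ spans $L/L^2$, I would show that $\mathcal{R}_{e_1}$ maps $L^k$ onto $L^{k+1}$ modulo $L^{k+2}$ while sending $L^{k+1}$ into $L^{k+2}$; since each quotient $L^k/L^{k+1}$ is one-dimensional, $\mathcal{R}_{e_1}$ induces an isomorphism $L^k/L^{k+1} \cong L^{k+1}/L^{k+2}$ for $1 \le k \le n-1$. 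Starting from $e_1 \notin L^2$ and iterating, this yields $e_k \in L^k \setminus L^{k+1}$ for every $k$; the standard filtration argument (inspect the least index carrying a nonzero coefficient in a dependence relation) then shows that these $n$ vectors are linearly independent, hence a basis.

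For the second claim I would compute the remaining products directly. The base case $[e_i, e_2] = [e_i, [e_1,e_1]] = [[e_i,e_1],e_1] - [[e_i,e_1],e_1] = 0$ starts an induction on $j$: expanding $[e_i, e_j] = [e_i, [e_{j-1}, e_1]] = [[e_i, e_{j-1}], e_1] - [[e_i, e_1], e_{j-1}]$ and using the inductive hypothesis on both summands forces $[e_i, e_j] = 0$ for all $i$ and all $j \ge 2$. Finally $[e_n, e_1] \in L^{n+1} = 0$, so the multiplication table closes up exactly as in $NF_n$, and the chosen basis furnishes the desired isomorphism.

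I expect the main obstacle to be the first claim: one must verify that right multiplication by the single generator $e_1$, rather than by an arbitrary element of $L$, already saturates each step of the lower central series. This is precisely where the one-dimensionality of the quotients $L^k/L^{k+1}$ and the inclusion $[L^k, L^2] \subseteq L^{k+2}$ combine to make the induced maps between consecutive quotients isomorphisms; without the null-filiform hypothesis these maps need not be injective, and the clean iterated construction of the basis would break down.
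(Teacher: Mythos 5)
Your proposal is correct, but note that the paper itself offers no proof of this statement: it is imported verbatim from the cited reference of Ayupov and Omirov, so there is no internal argument to compare against. Your outline supplies exactly what the paper delegates to the literature, and it does so along the standard lines of that reference: fix a generator $e_1 \in L\setminus L^2$, build the adapted basis $e_{i+1}=[e_i,e_1]$, and verify the multiplication table. The three ingredients you isolate are the right ones and fit together without gaps: the inclusion $[L^i,L^j]\subseteq L^{i+j}$ (proved by induction on $j$, quantifying over all $i$ so the step $[L^{i+1},L^j]\subseteq L^{i+j+1}$ is available); the decomposition $L=\langle e_1\rangle + L^2$, which gives $L^{k+1}\subseteq \mathcal{R}_{e_1}(L^k)+L^{k+2}$ and hence surjectivity, therefore bijectivity, of the induced maps between the one-dimensional quotients $L^k/L^{k+1}$, yielding $e_k\in L^k\setminus L^{k+1}$ and linear independence by the least-index argument; and the induction on $j$ killing $[e_i,e_j]$ for $j\geq 2$, whose base case $[e_i,[e_1,e_1]]=[[e_i,e_1],e_1]-[[e_i,e_1],e_1]=0$ is an immediate consequence of the Leibniz identity, with $[e_n,e_1]\in L^{n+1}=0$ closing the table. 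Your closing remark correctly identifies where null-filiformity (one-dimensionality of the quotients) is genuinely used; the one small thing worth making explicit in a written version is that the induced map $L^k/L^{k+1}\to L^{k+1}/L^{k+2}$ is well defined because $\mathcal{R}_{e_1}(L^{k+1})\subseteq [L^{k+1},L]=L^{k+2}$, which you do mention in passing.
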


Actually, a nilpotent Leibniz algebra is null-filiform if and only if it is one-generated algebra. Notice that this notion has no sense in Lie algebra case, because they are at least two-generated.

\begin{de} An $n$-dimensional Leibniz algebra $L$ is said to be filiform if $\dim L^i=n-i$, for $2\leq i \leq n$.
\end{de}

\begin{de}
Let $G$ be a group.
An algebra $L$ is a $G$-graded algebra if and only if
 the vector space $L$ has the following decomposition $L=\bigoplus\limits_{g\in G} L_g$
and the multiplication law of $L$ has the following property $L_g  L_h \subset L_{g+h}, \forall g,h \in G.$  Any $L_g$ is called a homogeneous subspace and
the set of all $g$ such that $L_g\ne 0$ is  called the {support} of the grading.
\end{de}
For a $G$-graded algebra $L=\bigoplus\limits_{g\in G} L_g$
we will use the following notation $L_g:=\langle e_{i_1}, \ldots, e_{i_k} \rangle_g,$
if the homogeneous  subspace $L_g$ is generated by $ e_{i_1}, \ldots, e_{i_k}.$

\begin{de}
 Given
two groups  gradings $L=\bigoplus\limits_{g\in G} L_g$ and $L=\bigoplus\limits_{h\in H} L_h$ we
shall say that they are  {
equivalent} if there are:
\begin{enumerate}
\item a bijection $\sigma\colon G\to G'$
between the supports of the first and second gradings
respectively,
\item an algebra automorphism $\varphi$ of
$L$ such that $\varphi(L_g)=L_{\sigma(g)}$ for any $g\in G$.
\end{enumerate}
\end{de}




Following \cite{OmiRak} we introduce the next class of filiform non-Lie Leibniz algebras.

\begin{de} \label{thm28} A complex $n$-dimensional  algebra admitting a basis $\{e_1,e_2,\dots,e_n\}$  such that the table of multiplication of the algebra has the  following form:
$$\begin{array}{llll}
e_ie_1 & =  & e_{i+1}, & \  2\leq i \leq {n-1},\\{}
e_1e_2 & =  &\theta e_n, & \\
\end{array} $$
with $\theta \in \mathbb{C}$ is called a {\it one-parametric filiform Leibniz algebra}.
\end{de}


In the description of all abelian groups grading of our algebras,
we are using the techniques explained in the recent monograph \cite[Section 1.4]{elduque13}, which have been successfully used in the classification of abelian groups gradings on different classes of algebras (see for instance \cite{bgr16, calderon10, draper06, draper09}). To do that, { we will always  suppose that the grading group is generated by the support of the grading}. Roughly speaking, it is shown in the above references that  any group  grading, (with the group   finitely generated abelian),  is induced by a finitely generated abelian subgroup of
diagonalizable automorphisms of the automorphism group of the
algebra, (finite dimensional and over an algebraically closed field of characteristic zero),  under study. The homogeneous components are the simultaneous
eigenspaces relative to the given subgroup of automorphisms. Furthermore, up to equivalences of gradings,
any such subgroup is contained in the normalizer of some fixed
maximal torus of the automorphism group of the algebra.


A special kind of gradings arises when we consider the inducing
automorphisms not only in the normalizer of a maximal torus, but
in the torus itself.

\begin{de}\rm
A grading of an algebra is  said to be   {\it toral} if it is
produced by   automorphisms within  a torus of the automorphism
group of the algebra.
\end{de}

\section{Gradings on null-filiform Leibniz algebras}

By the above, in the first we are calculating the group of automorphisms of our algebra.
In the second, we are proving that the normalizer of a maximal torus in the group of automorphisms  is the same torus and so all of our gradings will be toral ones.
Finally, we are constructing all toral gradings on our algebra.

Let $A:=NF_n$ be a null-filiform Leibniz algebra.

\subsection{Automorphisms of  $A$}\label{autnf}
Let $f \in  {\rm Aut}(A)$, then
$
f(e_1)=a e_1+\displaystyle\sum_{j=2}^{n}a_j e_j,$
and $$f(e_i)=f(e_{i-1}e_1)=\ldots=
f((\ldots ((e_{1}e_1) e_1)\ldots )e_1)=
(\ldots ((f(e_{1})f(e_1)) f(e_1))\ldots) f(e_1),$$
hence
$\begin{array}{ll}
f(e_i)=a^{i} e_i+ a^{i-1}\displaystyle\sum_{j=1}^{n-i} a_{j+1}e_{j+i},&
1\leq i\leq n,
\end{array}$
with $a\neq 0.$ It is easy to see that all automorphisms of $A$ have the same form.

\subsection{Maximal Torus}\
It is easy to verify that a maximal torus is formed by:
$$\mathcal{T}=\left\{t_{a}:=\left(\begin{array}{llll}
a&0&\dots&0\\
0&a^2&\dots&0\\
\vdots&\vdots&\ddots&\vdots\\
0&0&\dots&a^n

\end{array}\right): a\in \mathbb{K}^{*}\right\}\cong \mathbb{K}^{*}.$$

\begin{lemma}\label{lemmatous}
	Let $\mathcal{N}(\mathcal{T})$ be the normalizer of $\mathcal{T}$. Then,  $\mathcal{N}(\mathcal{T})=\mathcal{T}.$
\end{lemma}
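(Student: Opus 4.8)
The plan is to show that any element $g$ of the normalizer $\mathcal N(\mathcal T)$ must actually lie in $\mathcal T$. Since $\mathcal N(\mathcal T) \subseteq \aut(NF_n)$, I already have a completely explicit description of $g$ from the automorphism computation above: writing $g$ in the basis $\{e_1,\dots,e_n\}$, it is lower-triangular-in-reverse, determined by a nonzero scalar $\alpha$ together with the parameters $\beta_n,\beta_{n-1},\dots$ appearing in the formula
$$g(e_i)=\alpha^{i} e_i+\sum_{j=i+1}^{n}\frac{\beta_{n+1-j}}{\alpha^{n-j}}e_j.$$
The torus $\mathcal T$ itself consists exactly of the automorphisms with all off-diagonal parameters zero, i.e.\ $g(e_i)=\alpha^i e_i$. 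So the goal reduces to proving that the normalizing condition forces every one of these off-diagonal parameters to vanish.

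\textbf{First I would} fix an arbitrary $t\in\mathcal T$ with diagonal entries $(\mu,\mu^2,\dots,\mu^n)$ for some $\mu\in\mathbb K^*$, and impose $g\, t\, g^{-1}\in\mathcal T$, or equivalently the cleaner relation $g\,t = t'\,g$ for some $t'\in\mathcal T$ with diagonal $(\nu,\nu^2,\dots,\nu^n)$. I would compute both sides as matrices (or as their action on each basis vector $e_i$) and compare entries. The diagonal comparison immediately gives $\alpha^i\mu^i = \nu^i\alpha^i$, hence $\nu=\mu$, so conjugation by $g$ must fix $\mathcal T$ pointwise if it stabilizes it at all — a useful simplification. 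Then I would read off the off-diagonal entries: in position $(i,j)$ with $j>i$ the coefficient of $e_j$ in $g\,t(e_i)$ is $\frac{\beta_{n+1-j}}{\alpha^{n-j}}\mu^{i}$, while in $t'\,g(e_i)=t\,g(e_i)$ it is $\frac{\beta_{n+1-j}}{\alpha^{n-j}}\mu^{j}$. Matching these forces $\beta_{n+1-j}(\mu^i-\mu^j)=0$ for all $i<j$.

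\textbf{The key step} is then to observe that $\mu$ ranges over all of $\mathbb K^*$: since the normalizer condition must hold for \emph{every} $t\in\mathcal T$, I may choose a single $\mu$ that is not a root of unity (possible over an algebraically closed field of characteristic zero, as is implicit here), so that $\mu^i\neq\mu^j$ whenever $i\neq j$. With such a $\mu$ the factor $\mu^i-\mu^j$ is nonzero for every pair $i<j$, forcing each parameter $\beta_{n+1-j}=0$. Consequently all off-diagonal entries of $g$ vanish and $g$ is the diagonal automorphism $e_i\mapsto\alpha^i e_i$, i.e.\ $g\in\mathcal T$. Combined with the obvious inclusion $\mathcal T\subseteq\mathcal N(\mathcal T)$, this yields $\mathcal N(\mathcal T)=\mathcal T$.

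\textbf{The main obstacle} I anticipate is purely bookkeeping: the denominators $\alpha^{n-j}$ and the shifted index $n+1-j$ make the matrix entries awkward to track, and one must be careful that $g^{-1}$ has the same triangular shape so that the conjugation is well defined and the entry comparison is clean. One should also confirm that it suffices to test the normalizer condition against a single well-chosen non-torsion $t$ rather than all of $\mathcal T$ — this is fine because stabilizing $\mathcal T$ under conjugation is equivalent to commuting appropriately with each of its elements, and one generic element already separates all the exponents $1,2,\dots,n$. No genuinely deep ingredient is needed beyond the explicit automorphism formula and the distinctness of the powers $\mu^1,\dots,\mu^n$.
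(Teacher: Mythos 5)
Your proof is correct and follows essentially the same route as the paper: both use the explicit description of $Aut(NF_n)$, rewrite the normalizer condition as $gt=t'g$, deduce from the diagonal entries that $t'=t$, and then force the off-diagonal parameters $\beta_1,\dots,\beta_{n-1}$ to vanish by testing against a suitable torus element. If anything, your choice of $\mu$ not a root of unity is \emph{more} careful than the paper's stipulation $\lambda\neq 0,1$, which on its own would not make every factor $\lambda^i-\lambda^j$ nonzero (e.g.\ $\lambda=-1$ when $j-i$ is even).
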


\begin{proof}
We have $\mathcal{N}(\mathcal{T})=\{M\in {\rm Aut}(A):\ MTM^{-1}\in \mathcal{T},\ \forall \ T\in \mathcal{T} \}.$

For $$M=\left(\begin{array}{lllll}
a&0&\dots&0&0\\
a_2&a^2&\dots&0&0\\
\vdots&\vdots&\ddots&\vdots&\vdots\\
a_{n-1}& aa_{n-2}& \dots&a^{n-1}&0\\
a_n& a a_{n-1}&\dots& a^{n-1} a_2&a^n
\end{array}\right) \mbox{and} \
T=\left(\begin{array}{lllll}
\lambda&0&\dots&0&0\\
0&\lambda^2&\dots&0&0\\
\vdots&\vdots&\ddots&\vdots&\vdots\\
0&0&\dots&\lambda^{n-1}&0\\
0&0&\dots&0&\lambda^n
\end{array}\right),$$
we have
$$MT=\left(\begin{array}{lllll}
a\lambda&0&\dots&0&0\\
a_2\lambda&a^2\lambda^2&\dots&0&0\\
\vdots&\vdots&\ddots&\vdots&\vdots\\
a_{n-1}\lambda& aa_{n-2}\lambda^2& \dots&a^{n-1}\lambda^{n-1}&0\\
a_n\lambda& a a_{n-1}\lambda^2&\dots& a^{n-1} a_2\lambda^{n-1}&a^n\lambda^n
\end{array}\right).$$

We need to prove the existence of $T'\in \mathcal{T}$ such that $MT=T'M.$
For $$T'=\left(\begin{array}{llll}
d&0&\dots&0\\
0&d^2&\dots&0\\
\vdots&\vdots&\ddots&\vdots \\
0&0&\dots&d^n
\end{array}\right)$$
we conclude that $d=\lambda$ and by choosing $\lambda\neq 1,0$ we get $a_2=a_3=\cdots=a_{n}=0$ and $M\in \mathcal{T}.$
\end{proof}

\subsection{Cyclic gradings}
A  cyclic grading  is a toral grading produced by a single
toral element $t_{a}$. In this case the grading is always
equivalent to a grading by a cyclic group. In order to study the grading induced by
$t_{a}$ on $A$, which is  the decomposition of $A$ as a
direct sum of eigenspaces of such toral element $t_{a}$, we are going to distinguish
different possibilities motivated by  the cardinal of the possible values of the set of eigenvalues of  $t_{a}$.


We distinguish the following cases:
\begin{itemize}
	\item $a^{i}\neq 1$ for $i=1,2,\dots,n.$ In this case, we have the next grading:
	$$A=\langle e_1\rangle_1\oplus\langle e_2\rangle_2\oplus\cdots\oplus \langle e_n\rangle_n:\ \mbox{ $\mathbb{Z}$-grading}.$$




\item ${a}^i=1$ where  $a$ is an $i$-primitive root of 1, and $1 \leq i \leq n-1$, $i \in {\mathbb N}$.
If $i=1,$ we obtain the trivial grading. Thus, we can consider $2\leq i\leq n-1.$
Let us write  $n=mi+p$  with $0\leq p\leq i-1,$ $p \in {\mathbb N}$.  We have a $\mathbb{Z}_i$-grading
with the following homogeneous subspaces:

\begin{eqnarray}
\begin{array}{lll}\label{grad}
A_{\overline{0}}&=&\langle e_{i},e_{2i},\dots,e_{mi}\rangle\\
A_{\overline{1}}&=&\langle e_1,e_{i+1},\dots,e_{mi+1}\rangle\\
A_{\overline{2}}&=&\langle e_2,e_{i+2},\dots,e_{mi+2}\rangle\\
& &\qquad \qquad \ldots\\
A_{\overline{p}}&=&\langle e_p,e_{i+p},\dots,e_{mi+p}\rangle\\
A_{\overline{p+1}}&=&\langle e_{p+1},e_{i+p+1},\dots,e_{(m-1)i+p+1}\rangle\\
& &\qquad \qquad \ldots\\
A_{\overline{i-1}}&=&\langle e_{i-1},e_{2i-1},\dots,e_{(m-1)i+i-1}\rangle.
\end{array}
\end{eqnarray}

\end{itemize}

\begin{lemma}\label{gradings}
Let $A$ be a null-filiform Leibniz algebra of dimension $n$. Then any cyclic  grading is equivalent to only one of the following $n$ gradings:
\begin{enumerate}
	\item[{\rm (I)}] The trivial grading given by $A=\langle e_1,e_2,\dots,e_n\rangle.$
	\item[{\rm (II)}] The $\mathbb{Z}$-grading given by $A=\langle e_1\rangle_1\oplus\langle e_2\rangle_2\oplus\dots \oplus \langle e_n\rangle_n.$
	\item[{\rm (III)}] For any $2 \leq i \leq n-1$, the $\mathbb{Z}_i$-grading  given by
$A=A_{\overline{0}}\oplus A_{\overline{1}}\oplus \cdots\oplus A_{\overline{i-1}}$, where homogeneous  subspaces are described in grading {\rm (\ref{grad})}, being $n=mi+p$  with $0\leq p\leq i-1$.
\end{enumerate}
\end{lemma}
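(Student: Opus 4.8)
The plan is to exploit the standard correspondence between gradings and diagonalizable subgroups of $\aut(A)$, together with the two structural facts already in hand: the maximal torus is $\T\cong\mathbb{K}^{*}$ and, by Lemma~\ref{lemmatous}, $\N(\T)=\T$. First I would recall that a grading by a cyclic group is exactly the weight-space decomposition of $A$ under a single diagonalizable automorphism $g$: for a $\Z$-grading $g$ is a one-parameter subgroup (a cocharacter $\mathbb{K}^{*}\to\aut(A)$), and for a $\Z_i$-grading $g$ is a semisimple automorphism of finite order $i$. Since all maximal tori of the algebraic group $\aut(A)$ are conjugate and, by Lemma~\ref{lemmatous}, $\T$ is self-normalizing (so the associated Weyl group is trivial), every such $g$ can be conjugated into $\T$ without altering the equivalence class of the grading. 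Hence it suffices to analyse the decompositions produced by the elements
\[
g=\mathrm{diag}(\alpha,\alpha^{2},\dots,\alpha^{n})\in\T,\qquad \alpha\in\mathbb{K}^{*}.
\]

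Next I would read off the eigenvalue pattern. The automorphism $g$ scales the basis vector $e_i$ by $\alpha^{i}$, so $e_i$ and $e_j$ lie in the same homogeneous component precisely when $\alpha^{\,i-j}=1$; the whole grading is therefore governed by the multiplicative order of $\alpha$. If $\alpha=1$ there is a single weight and we obtain the trivial grading (1). If $\alpha$ has infinite order, the differences $i-j$ run over $1,\dots,n-1$ and never annihilate $\alpha$, so the $n$ eigenvalues $\alpha,\dots,\alpha^{n}$ are pairwise distinct and every component is one-dimensional; this is the fine $\Z$-grading (2). Finally, if $\alpha$ is a primitive $i$-th root of unity with $2\le i$, then $\alpha^{j}$ depends only on $j\bmod i$, and collecting the basis vectors by their residue modulo $i$ gives exactly the homogeneous subspaces displayed in~(\ref{grad}) with $n=mi+p$; this is the $\Z_i$-grading (3), the range $1<i<n$ being precisely the one in which the pattern is genuinely coarser than (2) and genuinely finer than (1), since for order $\ge n$ all the $\alpha^{i}$ are again distinct and the partition collapses to that of the $\Z$-grading.

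It then remains to verify that each of these weight decompositions really is a grading and that the list is irredundant. Compatibility $[A_s,A_t]\subseteq A_{s+t}$ is automatic because $g$ is an automorphism: on the only nonzero products $[e_a,e_1]=e_{a+1}$ the scalar $\alpha^{a}\cdot\alpha=\alpha^{a+1}$ matches the weight of $e_{a+1}$, which under the character correspondence is exactly the additive condition defining the grading. For irredundancy I would note that two values of $\alpha$ of the same finite order induce the same partition of the basis, hence the same $\Z_i$-grading, and that the triviality of the Weyl group from Lemma~\ref{lemmatous} forbids any further identification among the three families. This yields the classification (1)--(3).

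The step I expect to be the main obstacle is the reduction in the first paragraph: justifying cleanly that every cyclic toral grading of $A$ is, up to equivalence, the weight decomposition of a single element of the fixed torus $\T$. This is where the algebraic-group structure of $\aut(A)$, the conjugacy of maximal tori, and above all the self-normalizing property $\N(\T)=\T$ of Lemma~\ref{lemmatous} are essential; once this reduction is secured, the case analysis on the order of $\alpha$ and the bookkeeping in~(\ref{grad}) are entirely routine.
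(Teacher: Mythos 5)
Your proposal is correct and follows essentially the same route as the paper: the paper's proof of this lemma is precisely the case analysis, carried out just before the lemma, of the eigenspace decompositions of the torus elements $\mathrm{diag}(\alpha,\alpha^2,\dots,\alpha^n)$ according to the multiplicative order of $\alpha$ (order $1$ giving the trivial grading, infinite order the $\mathbb{Z}$-grading, and order $i$ the $\mathbb{Z}_i$-grading of (\ref{grad})). The only difference is that you make explicit the reduction of a cyclic toral grading to a single element of the fixed torus via conjugacy of maximal tori and Lemma \ref{lemmatous}, a step the paper leaves implicit in its general framework borrowed from the Kac-superalgebra paper.
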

\begin{proof}
We have only to show that two different gradings in the lemma are not equivalent, but this is an immediate consequence of the fact that the cardinal of the support of any  grading is different to the cardinal of the support of any other different grading.
\end{proof}

\subsection{Classification theorem}
\begin{lemma}\label{morse1}
	If $e_1$ is a homogeneous element of an abelian  group grading (that is, $e_1\in A_x$ for some $x\in G$), then the grading is one of the list of Lemma \ref{gradings}.
\end{lemma}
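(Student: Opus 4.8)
The plan is to exploit that $NF_n$ is one-generated, as noted just after its structure theorem: every basis vector is obtained from $e_1$ by iterated right multiplication, since $e_{k+1}=[e_k,e_1]$ and hence $e_k=[\dots[[e_1,e_1],e_1],\dots,e_1]$ with $k-1$ brackets. Declaring $e_1$ homogeneous therefore forces the degree of every other basis vector. Concretely, fixing $e_1\in A_x$ for some $x\in G$, I would induct on $k$ using only the defining property $[A_g,A_h]\subseteq A_{g+h}$ of a grading: the base case is $e_1\in A_x$, and assuming $e_k\in A_{kx}$ the relation $e_{k+1}=[e_k,e_1]$ gives $e_{k+1}\in[A_{kx},A_x]\subseteq A_{(k+1)x}$. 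Hence $e_k\in A_{kx}$ for all $1\le k\le n$.

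Next I would show that this already determines the whole grading. Setting $V_g=\langle e_k : kx=g\rangle$, the basis $\{e_1,\dots,e_n\}$ is distributed among the $V_g$, so $A=\bigoplus_g V_g$ with $V_g\subseteq A_g$; comparing this with $A=\bigoplus_g A_g$ and summing dimensions forces $\dim V_g=\dim A_g$, whence $V_g=A_g$ for every $g$. Thus the grading depends only on the single element $x$, the component of degree $g$ being the span of those $e_k$ with $kx=g$.

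It then remains to run a case analysis on the order $d$ of $x$ in $G$ and match each outcome with Lemma \ref{gradings}. If $x$ is the identity ($d=1$), all $e_k$ lie in $A_0$, giving the trivial grading, item (1). If $x$ has infinite order, or finite order $d\ge n$, then $x,2x,\dots,nx$ are pairwise distinct (since $kx=lx$ with $|k-l|<d$ forces $k=l$), each component is a single line $\langle e_k\rangle$, and the decomposition is the $\mathbb{Z}$-grading of item (2). If $1<d<n$, then $kx=lx$ exactly when $k\equiv l\pmod d$, so the components are the spans of the residue classes of $\{1,\dots,n\}$ modulo $d$, which is precisely the grading (\ref{grad}) with $i=d$, item (3). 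The only genuinely delicate point is this last bookkeeping: one must notice that a finite order $d\ge n$ produces the same decomposition into subspaces as the infinite-order case, so it is absorbed into item (2) rather than yielding a spurious extra family, and that for $1<d<n$ the residue-class partition coincides exactly with the one displayed in (\ref{grad}). Everything else is a direct consequence of one-generation together with the definition of a grading.
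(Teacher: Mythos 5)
Your proof is correct and follows essentially the same route as the paper's: iterated right multiplication by $e_1$ forces $e_k\in A_{kx}$ for all $k$, and a case analysis on the order of $x$ recovers the trivial, $\mathbb{Z}$- and $\mathbb{Z}_i$-gradings of Lemma \ref{gradings}. You are in fact somewhat more careful than the paper, which omits the dimension count showing that each $A_g$ equals the span of the $e_k$ with $kx=g$ and does not spell out the order-one case or the observation that finite order $\geq n$ yields the same decomposition as infinite order.
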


\begin{proof}
	Let $e_1\in A_x$ and $x\in G$ be. Let $i$ be the order of $x$.
	\begin{itemize}
		\item $i>n.$ Let $j\leq n$ be, we have
		$e_j= (\ldots ((e_1\underbrace{e_1)e_1)\dots )e_1}_{j-1}\in A_{jx}$
		and the $\mathbb{Z}$-grading
		$$A=\langle e_1\rangle\oplus \langle e_2\rangle\oplus \cdots \oplus \langle e_n\rangle.$$
		
		\item $i\leq n.$ Then, $e_1\in A_x,\ e_2\in A_{2x},\dots, e_{i-1}\in A_{(i-1)x}$ with $|\{x,2x,\dots,(i-1)x\}|=i-1.$
		Let $j\geq i$ be, $j=im+p$ with $0\leq p<i.$ Then
		$e_j= (\ldots ((e_1\underbrace{e_1)e_1)\dots)e_1}_{j-1}\in A_{im+p}=A_{\overline{p}}.$ Thus,

		$$A=\langle e_1,e_{1+i},\dots\rangle_{\overline{1}}\oplus \langle e_2,e_{2+i},\dots\rangle_{\overline{2}}
		\oplus\cdots\oplus \langle e_{i-1},e_{2i-1},\dots\rangle_{\overline{i-1}}\oplus \langle e_i,e_{2i},\dots\rangle_{\overline{0}}$$
	\end{itemize}
	\end{proof}

Lemma \ref{lemmatous} and Lemma \ref{morse1} will allow us to assert that all of the abelian group gradings of $A$ are the ones given in Lemma \ref{gradings}. Indeed, since by Lemma \ref{lemmatous}, $\mathcal{N}(\mathcal{T})=\mathcal{T}, $ we know that all of the gradings are toral. Hence any grading is  the simultaneous
eigenspaces decomposition of $A$ respect to a given (abelian) subgroup of semisimple automorphisms $\mathcal{U}$ contained in $\mathcal{T}$. Now. since $$\mathcal{T}=\{t_{a} : a \in {\mathbb K}^*\},$$ (see 3.1.2), and any $t_{a}$ is of the form  $t_{a}:=\left(\begin{array}{llll}
a&0&\dots&0\\
0&a^2&\dots&0\\
\vdots&\vdots&\ddots&\vdots\\
0&0&\dots&a^n
\end{array}\right)$ respect to the basis $\{e_1,e_2,...,e_n\}$ we have that $e_1$ belongs to the eigenspace associated to the eigenvalue $a$, for any $t_{a}  \in \mathcal{T}$ and so $e_1$ belongs to some  simultaneous
eigenspace respect to the  decomposition of $A$ relative to the simultaneous
eigenspaces decomposition of $A$ through the abelian family of semisimple automorphisms $\mathcal{U}$. From here  $e_1$ is a homogeneous element of any abelian  group grading of $A$ and we can apply Lemma \ref{morse1} to conclude the next  theorem:

\begin{teo}\label{teo1}
Any abelian group grading of a null-filiform Leibniz algebra is equivalent to only one of the list of Lemma \ref{gradings}.
\end{teo}


\section{Gradings on one-parametric filiform Leibniz algebras}

We will follow the same program than in the previous section to classify  the abelian group gradings on  one-parametric filiform Leibniz algebras. First we compute   the group of automorphisms of our algebras. Then, in order to  find a maximal torus and
compute its normalizer, we will have two distinguish two cases attending to the possible nullity of the parameter. We will obtain that just in  case this parameter is zero  all of the gradings are   toral ones. In this case we will also have that   any abelian group grading is necessarily cyclic. For the remaining  cases  we will have to develop new tools for their study.


\subsection{Automorphisms of $F_1$.}
Our first goal is to compute its group of automorphisms.
Let ${\rm dim}(F_1) \geq 4$ and $f\in {\rm Aut}(F_1)$. Then
\begin{longtable}{rclcrcl}
$f(e_1)$ & $=$ & $\displaystyle\sum_{k=1}^{n} a_k e_k$  \mbox{ and }
$f(e_2)$ & $=$ & $\displaystyle\sum_{k=1}^{n} b_k e_k.$
\end{longtable}

It is easy to see that $a_1b_2\neq 0.$ From
$f(e_1)^2=f(e_1^2)=0$ we find $f(e_1)=a_1 e_1+a_ne_n;$
and from  $f(e_2)^2=f(e_2^2)=0$ we have $b_1=0$.
Also from  $f(e_1)f(e_2)=\theta f(e_n)$ we obtain $\theta (1-a_1^{n-3})=0.$

If we rename $a_1=a$ and $b_2=b,$
we have
\begin{longtable}{rcl}
$f(e_1)$ & $=$ & $a e_1+ a_n e_n,$\\
$f(e_i)$ & $=$ & $a^{i-2}(b e_i+b_3 e_{i+1}+\cdots +b_{n-i+2}e_n), \ 2\leq i\leq n,\quad a b\neq 0,$\\
$\theta(1-a^{n-3} )$ & $=$ & $0$.
\end{longtable}

So we get:
\begin{equation}\label{June1}
{{\rm Aut}}(F_1)=
\left\{\left(\begin{array}{lllll}
a&0&0&\dots&0\\
0&b&0&\dots&0\\
0&b_3&ab&\dots&0\\
0&b_4&ab_3&\dots&0\\
\vdots&\vdots&\vdots&\ddots&\vdots\\
a_n&b_n&ab_{n-1}&\dots&a^{n-2}b

\end{array}\right):
\begin{array}{l}
a_n, b_3,...,b_n \in \mathbb{C} \hspace{0,1cm} {\rm and}  \hspace{0,1cm} a,b\in \mathbb{C}^{*}, \\ \hspace{0,1cm} {\rm  } \hspace{0,1cm}
 \theta(1-a^{n-3})=0
 \end{array}\right\}.
 \end{equation}

 To  calculate a maximal torus in the above   group of automorphisms   we are going to distinguish two cases.

\subsection{Case in which $\theta=0$}


Since in this case  $${{\rm Aut}}(F_1)=\left\{\left(\begin{array}{lllll}
a&0&0&\dots&0\\
0&b&0&\dots&0\\
0&b_3&ab&\dots&0\\
0&b_4&ab_3&\dots&0\\
\vdots&\vdots&\vdots&\ddots&\vdots\\
a_n&b_n&ab_{n-1}&\dots&a^{n-2}b
\end{array}\right):
a_n, b_3,...,b_n \in \mathbb{C}, \ a,b\in \mathbb{C}^{*}  \right\}$$

for $n \geq 3$ (the case $n=3$ is easy to verify), we get  that a maximal torus is formed by:
\begin{equation}\label{toruss}
\mathcal{T}=\left\{t_{a,b}:=\left(\begin{array}{ccccc}
a&0&0&\dots&0\\
0&b&0&\dots&0\\
0&0&ab&\dots&0\\
\vdots&\vdots&\vdots&\ddots&\vdots\\
0&0&0&\dots&a^{n-2}b

\end{array}\right):a,b\in \mathbb{C}^{*}\right\} \cong \mathbb{C}^{*}  \times \mathbb{C}^{*}.
\end{equation}

By  similar calculations to Lemma \ref{lemmatous} one can prove the following result:

\begin{lemma}\label{yes1}
	Let $\mathcal{N}(\mathcal{T})$ be the normalizer of $\mathcal{T}$. Then,  $\mathcal{N}(\mathcal{T})=\mathcal{T}.$
\end{lemma}

Denote by  $A$ a one-parametric filiform Leibniz algebra  with  $\theta=0$.  Let us compute the grading on $A$ induced by only an element $t_{a,b}$ in our maximal torus.
We will denote $d_1=a,$ $d_2=b$ and $d_i=a^{i-2}b$ with $3\leq i\leq n$ the diagonal of the matrix $t_{a,b}$ respect our fixed basis.

\medskip

We distinguish the following cases:
\begin{enumerate}
	\item $a=b.$
	\begin{enumerate}
		\item[$(1.1)$] If $a=b=1.$ In this case, we have the trivial grading:
		$$A=\langle e_1,e_2,\dots,e_n\rangle.$$

		\item[$(1.2)$] If $a=b$ and $a^i=1$,  being   $a$  an $i$-primitive root of 1  and $2 \leq i \leq n-1$, $i \in {\mathbb N}.$
 Let $n=im+p$ be with $0\leq p<i.$ Then, we get the $\mathbb{Z}_{i}$-grading
		
\begin{longtable}{lllllllllll}

		$A$ &$=$& 		&&    $\langle e_{i+1},$ &$e_{2i+1}$  &$\dots,$&$e_{1+(m-1)i},$& $e_{1+mi}\rangle_{\overline{0}},$\\
		&&$\oplus$ & $\langle e_1, e_2,$ &$e_{2+i},$&$e_{2+2i},$&$\dots,$&$e_{2+(m-1)i},$ & $e_{2+mi}\rangle_{\overline{1}}$ \\

		&&  $\oplus$ & $\langle e_3,$ &$e_{3+i},$ &$e_{3+2i},$ &$\dots,$ &$e_{3+(m-1)i},$ & $e_{3+mi}\rangle_{\overline{2}}$& \\
		&&  &$\ldots$ & \\
		&&  $\oplus$ & $\langle e_{p},$ &$e_{p+i},$ &$e_{p+2i},$ &$\dots,$ &$e_{p+(m-1)i},$ &  $e_{p+mi}\rangle_{\overline{{p-1}}}$ \\
		&&  $\oplus$ & $\langle e_{p+1},$ &$e_{p+1+i},$ &$e_{p+1+2i},$ &$\dots,$ &$e_{p+1+(m-1)i}\rangle_{\overline{p}}$ \\
		&&  & $\ldots$ \\
		&&  $\oplus$ & $\langle e_{i},$ &$e_{2i},$ &$e_{3i},$ &$\dots,$& $e_{mi}\rangle_{\overline{i-1}}.$\\
		\end{longtable}

		\item[$(1.3)$] If $a=b$ and $a^i\neq 1, 0<i<n.$
		We have the $\mathbb{Z}$-grading
		$$A=\langle e_1,e_2\rangle_1\oplus \langle e_3\rangle_2\oplus \langle e_4\rangle_3\oplus\cdots \oplus \langle e_{n-1}\rangle_{n-2}\oplus \langle e_n\rangle_{n-1}.$$
		
	\end{enumerate}
\item $a\neq b.$ We have $d_1=a,$ $d_2=b,$ $d_i=a^{i-2}b$ with $3\leq i\leq n.$
\begin{enumerate}
\item[$(2.1)$] If $a=1.$ We obtain a $\mathbb{Z}_2$-grading
	$$A=\langle e_1\rangle_{\overline{0}} \oplus \langle e_2,e_3,\dots,e_n\rangle_{\overline{1}}.$$

\item[$(2.2)$] If $a=-1$ and $b=1.$

	We get the $\mathbb{Z}_2$-grading
	$$A= \langle e_2,e_4, e_6,\ldots \rangle_{\overline{0}} \oplus \langle e_1,e_3,e_5, e_7,\ldots \rangle_{\overline{1}}.$$

\item[$(2.3)$] If $a=-1$ and $b \neq 1.$

We get the $\mathbb{Z} \times  \mathbb{Z}_2  $-grading
	$$A=\langle e_1\rangle _{(0,{\overline{1}})}\oplus \langle e_2,e_4, e_6, \dots \rangle _{(1,{\overline{0}})}\oplus \langle e_3,e_5, e_7, \dots \rangle_{(1,{\overline{1}})}.$$
	
\item[$(2.4)$] If $a\notin \{1,-1\}.$

	\begin{enumerate}
		
	\item[$(2.4.1)$] If $b=1,$ then $d_1=a,$ $d_2=1,$ $d_i=a^{i-2}$ with $3\leq i\leq n.$
	We can distinguish two cases:
	\begin{enumerate}
		\item If there exists $i$ with $3\leq i\leq n-2$ such that $a^i=1.$ Let $n = mi+p, $ $0\leq p\leq i-1.$ We have the following $\mathbb{Z}_i$-grading

\begin{longtable}{lllllllllll}

		$A$ &$=$& & &  $\langle 		 e_2,$ & $e_{2+i},$ &$e_{2+2i},$ &$\ldots,$ &$e_{2+(m-1)i},$& $e_{2+mi}\rangle_{\overline{0}}$ \\
		
		&&  $\oplus$ & $\langle e_1,$ & \ $e_3,$ &$e_{3+i},$ &$e_{3+2i},$ &$\ldots,$ &$e_{3+(m-1)i},$ &$e_{3+mi}\rangle_{\overline{1}}$ \\
		&&  &$\ldots$ & \\
		&&  $\oplus$ && $\langle  e_{p},$ & $e_{p+i},$ & $e_{p+2i},$ &$\ldots,$ & $e_{p+(m-1)i},$ & $e_{p+mi}\rangle_{{\overline{p-2}}}$ \\
		&&  $\oplus$ && $\langle  e_{p+1},$ &$e_{p+1+i},$ &$e_{p+1+2i},$ &$\ldots,$ &$e_{p+1+(m-1)i}\rangle_{{\overline{p-1}}}$ \\
		&&  & $\ldots$ \\
		&&  $\oplus$ && $\langle  e_{i+1},$ &$e_{2i+1},$ &$e_{3i+1},$ &$\ldots,$& $e_{i+1+(m-1)i}\rangle_{{\overline{i-1}}}.$
		
		\end{longtable}

		\item If $a^i \neq 1$ for any $i,$ $3\leq i\leq n-2.$ We have the following $\mathbb{Z}$-grading:
		$$A=\langle e_2\rangle_{0} \oplus \langle e_1,e_3\rangle_{1} \oplus \langle e_4\rangle_2 \oplus\cdots \oplus \langle e_{n-1}\rangle_{n-3} \oplus \langle e_n\rangle_{n-2}.$$
		
	\end{enumerate}

\item[$(2.4.2)$] If $b\neq 1.$
\begin{enumerate}
	\item $d_i\neq d_j$ for all $i,j$ with $1\leq i,j\leq n.$ We have
	the following $\mathbb{Z}$-grading:
	$$A=\langle e_1\rangle_1\oplus \langle e_2\rangle_2\oplus \langle e_3\rangle_{3}\oplus \langle e_4\rangle_{4}\oplus\cdots \oplus \langle e_{n-1}\rangle_{n-1}\oplus \langle e_n\rangle_{n}.$$
	
	\item  there exist $k,l$ with $k\neq l$ such that $d_k=d_l$ with $3\leq k<l\leq n.$
	Thus, $a^{i}=1.$ Let $n= mi+p $ be. We have
	the following $\mathbb{Z} \times \mathbb{Z}_{i}$-grading:

\begin{longtable}{lllllllllll}
		$A$ &=&  & $\langle e_1  \rangle_{(0,{\overline{1}})}$\\
		&&  $\oplus$&$  \langle  e_2,$ & $e_{2+i},$ &$e_{2+2i},$&$\ldots,$&$e_{2+(m-1)i},$&$e_{2+mi}\rangle_{(1,{\overline{0}})}$ \\
		&&  $\oplus$&$  \langle   e_3,$ &$e_{3+i},$ &$e_{3+2i},$ &$\ldots,$ &$e_{3+(m-1)i},$ &$e_{3+mi}\rangle_{(1,{\overline{1}})}$ \\
		&&  &$\ldots$ & \\
		&&  $\oplus $&$ \langle  e_{p},$ &$e_{p+i},$ &$e_{p+2i},$ &$\ldots,$ &$e_{p+(m-1)i},$ &$e_{p+mi}\rangle_{(1,{\overline{p-2}})}$ \\
		&&  $\oplus$&$  \langle  e_{p+1},$ &$e_{p+1+i},$ &$e_{p+1+2i},$ &$\ldots,$ &$e_{p+1+(m-1)i}\rangle_{(1,{\overline{p-1}})}$ \\
		&&  & $\ldots$ \\
		&&  $\oplus$&$  \langle  e_{i+1},$ &$e_{2i+1},$ &$e_{3i+1},$ &$\ldots,$& $e_{i+1+(m-1)i}\rangle_{(1,{\overline{i-1}})}.$
		\end{longtable}

	\item  there exists $i,$ $3\leq i\leq n$ such that $d_1=d_i$ and $d_1\neq d_j, i\neq j.$
	Thus,
	$b=\frac{1}{a^{i-3}}.$ We put $a^{i-3}\neq 1$ because in other case $b=1$ and it gives the case (2.4.1.B).   We have
	the following $\mathbb{Z}$-grading:
	$$A=\langle e_{2}\rangle_{-i+3} \oplus \cdots \oplus  \langle e_{i-2}\rangle_{-1} \oplus \langle e_{i-1}\rangle_{{{0}}} \oplus \langle e_1,e_i\rangle_{1} \oplus  \langle e_{i+1}\rangle_{{{2}}} \oplus \cdots  \oplus \langle e_{n}\rangle_{{{n-i+1}}},$$

	\item  there exist $k,l$ with $k>l$ such that $d_1=d_k=d_l$ and $k-l=i.$
	Thus $b=a^{3-l}$ and $a^{i}=1.$ Let $n=mi+p$ with $0\leq p< i$ be.
	We have
	the following $\mathbb{Z}_{i}$-grading:

\begin{longtable}{lllllllllll}
		$A$   &$=$&             &                     &$\langle  e_{1},$ & $e_{i+1},$& $e_{2i+1},$ & $e_{3i+1},$  & $\ldots \rangle_{{\overline{1}}}$ \\
		    &&          &$\oplus$&    $\langle   e_2,$ &   $e_{i+2},$            & $e_{2i+2},$ & $e_{3i+2},$  & $\ldots  \rangle_{{\overline{2}}}$ \\
		    &&         & &$\ldots$& \\
		    &&             &$\oplus$                      &$\langle  e_{i}, $& $e_{2i},$ & $e_{3i},$  & $e_{4i},$ & $\ldots \rangle_{{\overline{0}}}.$ \\

		\end{longtable}

\end{enumerate}

\end{enumerate}

\end{enumerate}
\end{enumerate}


Since one-dimensional filiform Leibniz algebras    and two-dimensional  filiform Leibniz algebras with  $\theta=0$ have  zero product we consider  filiform Leibniz algebras, with  $\theta=0$, of dimension greater or equal than three.

\begin{lemma}\label{gradings2}
Let $A$ be a one-parametric filiform Leibniz algebra    with  $\theta=0$ and  of dimension $n \geq 3$.
Then any  cyclic  grading is  equivalent to only  one of the following:
\begin{enumerate}
	\item[{\rm (1)}] The trivial grading given by $A=\langle e_1,e_2,\dots,e_n\rangle.$

	\item[{\rm (2)}] The   $\mathbb{Z}$-grading  given by
	$$A=\langle e_1\rangle_1\oplus \langle e_2\rangle_2\oplus \langle e_3\rangle_{3}\oplus \langle e_4\rangle_{4}\oplus\cdots \oplus \langle e_{n-1}\rangle_{n-1}\oplus \langle e_n\rangle_{n}.$$

\item[{\rm (3)}] The $\mathbb{Z}$-grading given by
		$$A=\langle e_1,e_2\rangle_1\oplus \langle e_3\rangle_2\oplus \langle e_4\rangle_3\oplus\cdots \oplus \langle e_{n-1}\rangle_{n-2}\oplus \langle e_n\rangle_{n-1}.$$


\item[{\rm (4)}] For any $3 \leq i \leq n$, the  $\mathbb{Z}$-grading given by
	$$A=\langle e_{2}\rangle_{-i+3} \oplus \cdots \oplus \langle e_{i-2}\rangle_{-1} \oplus \langle e_{i-1}\rangle_{{{0}}} \oplus \langle e_1,e_i\rangle_{1} \oplus  \langle e_{i+1}\rangle_{{{2}}} \oplus  \cdots  \oplus \langle e_{n}\rangle_{{{n-i+1}}}.$$

\item[{\rm (5)}] The $\mathbb{Z}_2$-grading given by
	$$A=\langle e_1\rangle_{\overline{0}} \oplus \langle e_2,e_3,\dots,e_n\rangle_{\overline{1}}.$$

\item[{\rm (6)}]  For $n \geq 4$, the $\mathbb{Z}_2$-grading given by
	$$A= \langle e_2,e_4, e_6,\ldots \rangle_{\overline{0}} \oplus \langle e_1,e_3,e_5, e_7,\ldots \rangle_{\overline{1}}.$$

\item[{\rm (7)}] The $\mathbb{Z} \times  \mathbb{Z}_2  $-grading given by
	$$A=\langle e_1\rangle _{(0,{\overline{1}})}\oplus \langle e_2,e_4, e_6, \dots \rangle _{(1,{\overline{0}})}\oplus \langle e_3,e_5, e_7, \dots \rangle_{(1,{\overline{1}})}.$$

\item[{\rm (8)}] For any $2 \leq i \leq n-1$, $(i,n) \neq (2,3)$,  the $\mathbb{Z}_{i}$-grading given by
		\begin{longtable}{lllllllllll}
	$A$ &$=$& 		&&    $\langle e_{i+1},$ &$e_{2i+1},$  &$\dots,$&$e_{1+(m-1)i},$& $e_{1+mi}\rangle_{\overline{0}}$\\
		&&$\oplus$ & $\langle e_1, e_2,$ &$e_{2+i},$&$e_{2+2i},$&$\dots,$&$e_{2+(m-1)i},$ & $e_{2+mi}\rangle_{\overline{1}}$ \\

		&&  $\oplus$ & $\langle e_3,$ &$e_{3+i},$ &$e_{3+2i},$ &$\dots,$ &$e_{3+(m-1)i},$ & $e_{3+mi}\rangle_{\overline{2}}$& \\
		&&  &$\ldots$ & \\
		&&  $\oplus$ & $\langle e_{p},$ &$e_{p+i},$ &$e_{p+2i},$ &$\dots,$ &$e_{p+(m-1)i},$ &  $e_{p+mi}\rangle_{\overline{{p-1}}}$ \\
		&&  $\oplus$ & $\langle e_{p+1},$ &$e_{p+1+i},$ &$e_{p+1+2i},$ &$\dots,$ &$e_{p+1+(m-1)i}\rangle_{\overline{p}}$ \\
		&&  & $\ldots$ \\
		&&  $\oplus$ & $\langle e_{i},$ &$e_{2i},$ &$e_{3i},$ &$\dots,$& $e_{mi}\rangle_{\overline{i-1}}.$\\
		\end{longtable}

    \item[{\rm (9)}]   For any $3 \leq i \leq n-1$, $(i,n) \neq (3,4)$, the $\mathbb{Z}_i$-grading given by

\begin{longtable}{lllllllllll}

		$A$ &$=$& & &  $\langle 		 e_2,$ & $e_{2+i},$ &$e_{2+2i},$ &$\ldots,$ &$e_{2+(m-1)i},$& $e_{2+mi}\rangle_{\overline{0}}$ \\
		
		&&  $\oplus$ & $\langle e_1,$ & \ $e_3,$ &$e_{3+i},$ &$e_{3+2i},$ &$\ldots,$ &$e_{3+(m-1)i},$ &$e_{3+mi}\rangle_{\overline{1}}$ \\
		&&  &$\ldots$ & \\
		&&  $\oplus$ && $\langle  e_{p},$ & $e_{p+i},$ & $e_{p+2i},$ &$\ldots,$ & $e_{p+(m-1)i},$ & $e_{p+mi}\rangle_{{\overline{p-2}}}$ \\
		&&  $\oplus$ && $\langle  e_{p+1},$ &$e_{p+1+i},$ &$e_{p+1+2i},$ &$\ldots,$ &$e_{p+1+(m-1)i}\rangle_{{\overline{p-1}}}$ \\
		&&  & $\ldots$ \\
		&&  $\oplus$ && $\langle  e_{i+1},$ &$e_{2i+1},$ &$e_{3i+1},$ &$\ldots,$& $e_{i+1+(m-1)i}\rangle_{{\overline{i-1}}}.$
		
		\end{longtable}

\item[{\rm (10)}] For any $3 \leq i \leq n-2$, the $\mathbb{Z} \times \mathbb{Z}_{i}$-grading given by

\begin{longtable}{lllllllllll}
		$A$ &=&  & $\langle e_1  \rangle_{(0,{\overline{1}})}$\\
		&&  $\oplus$&$  \langle  e_2,$ & $e_{2+i},$ &$e_{2+2i},$&$\ldots,$&$e_{2+(m-1)i},$&$e_{2+mi}\rangle_{(1,{\overline{0}})}$ \\
		&&  $\oplus$&$  \langle   e_3,$ &$e_{3+i},$ &$e_{3+2i},$ &$\ldots,$ &$e_{3+(m-1)i},$ &$e_{3+mi}\rangle_{(1,{\overline{1}})}$ \\
		&&  &$\ldots$ & \\
		&&  $\oplus $&$ \langle  e_{p},$ &$e_{p+i},$ &$e_{p+2i},$ &$\ldots,$ &$e_{p+(m-1)i},$ &$e_{p+mi}\rangle_{(1,{\overline{p-2}})}$ \\
		&&  $\oplus$&$  \langle  e_{p+1},$ &$e_{p+1+i},$ &$e_{p+1+2i},$ &$\ldots,$ &$e_{p+1+(m-1)i}\rangle_{(1,{\overline{p-1}})}$ \\
		&&  & $\ldots$ \\
		&&  $\oplus$&$  \langle  e_{i+1},$ &$e_{2i+1},$ &$e_{3i+1},$ &$\ldots,$& $e_{i+1+(m-1)i}\rangle_{(1,{\overline{i-1}})}.$
		\end{longtable}

\item[{\rm (11)}] For any $3 \leq i \leq n-2$,  the $\mathbb{Z}_{i}$-grading given by

\begin{longtable}{lllllllllll}
		$A$   &$=$&             &                     &$\langle  e_{1},$ & $e_{i+1},$& $e_{2i+1},$ & $e_{3i+1},$  & $\ldots \rangle_{{\overline{1}}}$ \\
		    &&          &$\oplus$&    $\langle   e_2,$ & \ $e_{i+2},$            & $e_{2i+2},$ & $e_{3i+2},$  & $\ldots  \rangle_{{\overline{2}}}$ \\
		    &&         & &$\ldots$& \\
		    &&             &$\oplus$                      &$\langle  e_{i}, $& $e_{2i},$ & $e_{3i},$  & $e_{4i},$ & $\ldots \rangle_{{\overline{0}}}.$ 
		\end{longtable}

\end{enumerate}
\end{lemma}

\begin{proof}

By the above discussion, we just have to prove that two different gradings of the lemma are not equivalent.

Since two gradings with supports of different cardinal are not equivalent, we have that (1), (2) and (3) are not equivalent among them.

Consider now the gradings in the family, depending on $i$,  (4) having all of them a support with cardinal $n-1$. From here these are not equivalent to (1)  or (2). Clearly the grading (3) neither is  equivalent to any grading in (4), because in the opposite case there should be an automorphism $\phi$ of $A$ such that
$\phi(\langle e_1,e_2\rangle)=\langle e_1,e_i\rangle$ with $i \geq 3$, what would be a contradiction. If we fix now $ 3 \leq i,j \leq n$ with $i \neq j$ and consider the two gradings of type (4) associated to $i$ and $j$. In case both gradings were equivalent, we would have an automorphism $\phi$ of $A$ such that $\phi(\langle e_1,e_i\rangle)=\langle e_1,e_j\rangle$, but the dimension of the subalgebra of $A$ generated by $\{e_1,e_i\}$ is $n-i+2$ while the one generated by $\{e_1,e_j\}$ is $n-j+2$. Hence there not exists such $\phi$ because $i \neq j$. We conclude that two different gradings of the family (4) are not equivalent.

Consider now the grading (5). This is not equivalent to any of the previous ones because of the different cardinals of the supports, (in the case (3) when $n=3$ as consequence of  there is not any automorphism satisfying $\phi(\langle e_1,e_2\rangle)=\langle e_2,e_3\rangle$ while in the case (4) when $n=3$ because   there is not any automorphism satisfying $\phi(\langle e_1,e_3\rangle)=\langle e_2,e_3\rangle$).

As above, the grading (6) is not equivalent to any grading (1)-(5).

Respect to the grading (7), we get as above that this is not equivalent to (1)-(6).

Taking into account that  the dimension of the homogeneous subspace containing $e_1$ is at least three in grading (8), we have as above that grading (8) is not equivalent to (1)-(7).

We observe that any grading in (9) has at least two homogeneous components with dimensions greater or equal that two, so they are not equivalent to any grading (1)-(7).

Since in (8) any grading has $e_1$ and $e_2$ in the same  homogeneous component, we also have that (9) is not equivalent to (8).

Taking now into account that in gradings (10) there is always at least four nonzero homogeneous components, that $\langle e_1 \rangle$ is always a  homogeneous component and that there exists at least one homogeneous component with dimension greater o equal than two, we get that gradings in (10) are not equivalent to any grading in (1)-(9).

Finally, by observing that in (11) all of the gradings have at least two homogeneous components with dimensions greater or equal than two, that $e_1$ does not belong to a one-dimensional homogeneous component, that $e_2$ does not belong to the same component than $e_1$ and that $e_3$ belongs to the same component than $e_1$ just in case $i=2$, we can assert that any grading in (11) is not equivalent to any grading in (1)-(10).

\end{proof}

Next, we are going to show that any abelian  group grading on $A$ is equivalent to a cyclic grading.

\begin{lemma}\label{JC}
	If $e_1, e_2$ are homogeneous elements of an abelian  group grading of $A$ (that is, $e_1\in A_x, e_2 \in  A_y $ for some $x,y\in G$), then the grading of $A$ is one of the list in Lemma \ref{gradings2}.
\end{lemma}

\begin{proof}
	Let $e_1\in A_x, e_2 \in A_y$ and $x,y\in G$ be; and let denote by $i$ be the order of $x$ (denoted by ${\rm ord}(x)=i$). We are going to distinguish two cases.
	
	In the first one $x=y.$ That is $e_1,e_2 \in A_x$. We can consider the next possibilities:
	\begin{itemize}
		\item We have $i \geq n.$ We get by the grading $e_3 = e_2 e_1\in A_{2 x}$, $e_4 = e_3 e_1\in A_{3 x}$ and so $e_j = e_{j-1} e_1\in A_{({j-1}) x}$ for any  $j\leq n$.  From here we  have the grading
		$$A=\langle e_1,e_2\rangle_x\oplus \langle e_3\rangle_{2x}\oplus \langle e_4\rangle_{3x}\oplus\cdots \oplus \langle e_{n-1}\rangle_{(n-2)x}\oplus \langle e_n\rangle_{(n-1)x},$$
which is clearly equivalent to the  $\mathbb{Z}$-grading in Lemma \ref{gradings2}-(3).

		\item We have $1 \leq i\leq n-1.$ As above we get that $e_{j}\in A_{(j-1)x}$ for $1\geq j \geq n$. Hence, in case $i=1$ (and so $x=0$) we get that $A$ has the trivial grading. Suppose then
 $2 \leq i\leq n-1.$ Then
		$$\hbox{$e_1,e_2\in A_x,\ e_3\in A_{2x},\dots, e_{i}\in A_{(i-1)x}$
		with $|\{x,2x,\dots,(i-1)x\}|=i-1,$}$$
and also  $$e_i = e_{i-1} e_1\in A_{i x}=A_0$$
because ${\rm{ord}}(x)=i$.

		Finally, let  $j\geq i+1$ be. Since   $j=ir+s$ with $r,s \in {\mathbb N}$ and $0\leq s<i,$ we get as consequence of ${\rm{ord}}(x)=i$ that
$$e_j \in A_{jx}=-A_{s x}.$$ From here, if we write   $n=im+p$ with $m,p \in {\mathbb N}$ and $0\leq p<i,$ we have the grading
\begin{longtable}{lllllllllll}
		$A$ &$=$& 		&&    $\langle e_{i+1},$ &$e_{2i+1},$  &$\dots,$&$e_{1+(m-1)i},$& $e_{1+mi}\rangle_{\overline{0}}$\\
		&&$\oplus$ & $\langle e_1, e_2,$ &$e_{2+i},$&$e_{2+2i},$&$\dots,$&$e_{2+(m-1)i},$ & $e_{2+mi}\rangle_{\overline{1}}$ \\

		&&  $\oplus$ & $\langle e_3,$ &$e_{3+i},$ &$e_{3+2i},$ &$\dots,$ &$e_{3+(m-1)i},$ & $e_{3+mi}\rangle_{\overline{2}}$& \\
		&&  &$\ldots$ & \\
		&&  $\oplus$ & $\langle e_{p},$ &$e_{p+i},$ &$e_{p+2i},$ &$\dots,$ &$e_{p+(m-1)i},$ &  $e_{p+mi}\rangle_{\overline{{p-1}}}$ \\
		&&  $\oplus$ & $\langle e_{p+1},$ &$e_{p+1+i},$ &$e_{p+1+2i},$ &$\dots,$ &$e_{p+1+(m-1)i}\rangle_{\overline{p}}$ \\
		&&  & $\ldots$ \\
		&&  $\oplus$ & $\langle e_{i},$ &$e_{2i},$ &$e_{3i},$ &$\dots,$& $e_{mi}\rangle_{\overline{i-1}}.$\\
		\end{longtable}

This grading is equivalent to the 	$\mathbb{Z}$-grading in Lemma \ref{gradings2}-(3) when $(i,n)=(2,3)$ or to the	$\mathbb{Z}_i$-grading in Lemma \ref{gradings2}-(8) when $(i,n) \neq (2,3)$.
	\end{itemize}

	
	In the second case $x \neq y.$ That is $e_1\in A_x, e_2 \in A_y$ with $x \neq y$.  We distinguish three possibilities.

	\begin{itemize}

\item First $i=1$. Then $x=0$ and so $$e_j= (\ldots ((e_2\underbrace{e_1)e_1)\dots)e_1}_{j-2}\in A_{y}$$
for any  $3 \leq j\leq n$. Hence
$$A=\langle e_1\rangle_{0} \oplus \langle e_2,e_3,\dots,e_n\rangle_{y},$$
which is equivalent to the $\mathbb{Z}_2$-grading in Lemma \ref{gradings2}-(5).

		\item Second, $i \geq n-1.$ We have  as above that $$e_j\in A_{y+(j-2)x}$$
 for any  $3 \leq j\leq n$.  Observe that the fact ${\rm ord}(x)\geq n-1$ gives us that $y+px \neq y+qx$ and $ y+px \neq y$ for any $1\leq p, q \leq n-2$ with $p \neq q$. However, it is possible that $y+px =x$ for some $1\leq p \leq n-2$. That is, $y=-kx$ for some $0\leq k \leq n-3$. Form here, we have that to distinguish two cases so as to obtain:

If $y \neq -kx$ for any $k \in \{0,1,...,n-3\}$ we get the grading
$$A=\langle e_1\rangle_x\oplus \langle e_{2}\rangle_{y}\oplus \langle e_{3}\rangle_{y+x}\oplus \langle e_4\rangle_{y+2x}\oplus\cdots \oplus \langle e_{n-1}\rangle_{y+(n-3)x}\oplus \langle e_n\rangle_{y+(n-2)x},$$
which is equivalent to the $\mathbb{Z}$-grading in Lemma \ref{gradings2}-(2).

If $y = -kx$ for some $k \in \{0,1,...,n-3\}$ then we have
$$\begin{array}{l}
A=\langle e_{2}\rangle_{-kx} \oplus \langle e_{3}\rangle_{(-k+1)x} \oplus \cdots \oplus \langle e_{k+1}\rangle_{-x} \oplus \langle e_{k+2}\rangle_{{{0}}} \oplus \langle e_1,e_{k+3}\rangle_{x} \oplus\\ \qquad \oplus \langle e_{k+4}\rangle_{{{2x}}} \oplus  \cdots  \oplus \langle e_{n}\rangle_{{{(n-k-2)x}}},
\end{array}$$
which is equivalent to the  $\mathbb{Z}$-grading in Lemma \ref{gradings2}-(4).

		\item Third, $2 \leq i\leq n-2.$ Then,
	$$\hbox{	$e_1 \in A_x, e_2 \in A_y, \ e_3\in A_{y+x},\dots, e_{i+1}\in A_{y+(i-1)x}$
		with $|\{x,2x,\dots,(i-1)x\}|=i-1.$}$$

We also have $$e_{i+2}  \in A_{y+ix}=A_{y}.$$
		Take now some  $j\geq i+3$ and express   $j-2=ir+s$ with $r,s \in {\mathbb N}$ and $0\leq s <i,$ then
$$e_j \in A_{y+(j-2)x}=A_{s}.$$

Now if we express $n=im+p$ with $m,p \in {\mathbb N}$ and $0\leq p <i,$ we can  distinguish as above two possibilities.

In the first one $y \neq -kx$ for any $k \in \{0,1,...,n-3\}$. Then we get the grading
\begin{longtable}{lllllllllll}
		$A$ &=&  & $\langle e_1  \rangle_{x}$\\
		&&  $\oplus$&$  \langle  e_2,$ & $e_{2+i},$ &$e_{2+2i},$&$\ldots,$&$e_{2+(m-1)i},$&$e_{2+mi}\rangle_{y}$ \\
		&&  $\oplus$&$  \langle   e_3,$ &$e_{3+i},$ &$e_{3+2i},$ &$\ldots,$ &$e_{3+(m-1)i},$ &$e_{3+mi}\rangle_{y+x}$ \\
		&&  &$\ldots$ & \\
		&&  $\oplus $&$ \langle  e_{p},$ &$e_{p+i},$ &$e_{p+2i},$ &$\ldots,$ &$e_{p+(m-1)i},$ &$e_{p+mi}\rangle_{y+(p-2)x}$ \\
		&&  $\oplus$&$  \langle  e_{p+1},$ &$e_{p+1+i},$ &$e_{p+1+2i},$ &$\ldots,$ &$e_{p+1+(m-1)i}\rangle_{y+(p-1)x}$ \\
		&&  & $\ldots$ \\
		&&  $\oplus$&$  \langle  e_{i+1},$ &$e_{2i+1},$ &$e_{3i+1},$ &$\ldots,$& $e_{i+1+(m-1)i}\rangle_{y+(i-1)x},$
		\end{longtable}
\noindent which is equivalent to the  $\mathbb{Z} \times \mathbb{Z}_2$-grading in Lemma \ref{gradings2}-(7) when $i=2$ or to $\mathbb{Z} \times \mathbb{Z}_i$-grading in Lemma \ref{gradings2}-(10) when $i \geq 3$.

In the second one $y = -kx$ for some $k \in \{0,1,...,n-3\}$.  Then we have  the grading
\begin{longtable}{lllllllllll}
		$A$   &$=$&             &                     &$\langle  e_{1},$ & $e_{k+3},$& $e_{k+3+i},$ & $e_{k+3+2i},$  & $\ldots  \rangle_{x}$ \\
		    &&          &$\oplus$&    $\langle   e_2,$ & \ $e_{k+4},$            & $e_{k+4+i},$ & $e_{k+4+2i},$  & $\ldots  \rangle_{2x}$ \\
		    &&         & &$\ldots$& \\
&&             &$\oplus$                      &$\langle  e_{k+1}, $& $e_{k+1+i},$ & $e_{k+1+2i},$  & $e_{k+1+3i},$ & $\ldots  \rangle_{(i-1)x}$ \\

		    &&             &$\oplus$                      &$\langle  e_{k+2}, $& $e_{k+2+i},$ & $e_{k+2+2i},$  & $e_{k+2+3i},$ & $\ldots  \rangle_{0}.$
\end{longtable}
\noindent which is  equivalent  either to the $\mathbb{Z}_2$-grading in \ref{gradings2}-(6) when $k=0$ (that is, $y=0$) and $i=2$ or to the $\mathbb{Z}_i$-grading in \ref{gradings2}-(9) when $k=0$  and $i\geq 3$,  or to the  $\mathbb{Z} \times \mathbb{Z}_2$-grading in Lemma \ref{gradings2}-(7) when $k \neq 0$ (that is, $y \neq 0$) and $i=2$ or
 to the  $\mathbb{Z}_i$-grading in \ref{gradings2}-(11) when $k \neq 0$ and $i \geq 3$.

	\end{itemize}

	\end{proof}

\begin{teo}
Any group grading of a one-parametric  filiform Leibniz algebra  with  $\theta=0$, and dimension $n \geq 3$,   is equivalent to only one in the list of Lemma \ref{gradings2}.
\end{teo}

\begin{proof}
 Recall  that any grading of   $A$  is induced by an abelian subgroup $\mathcal{G}$ of diagonalizable automorphisms in $\mathcal{N}(\mathcal{T})=\mathcal{T}$, being the homogeneous components of the grading the common eigenspaces of the elements in $\mathcal{G}.$

Since for any $f \in \mathcal{G}\subset \mathcal{T}$, Equation (\ref{toruss}) gives us that  $e_1$ and $e_2$ are eigenvectors of $f$, then $e_1$ and $e_2$ are homogeneous vectors in any grading of $A$. From here Lemma \ref{JC} completes the proof.
\end{proof}

\subsection{Case in which $\theta\neq 0$}

Taking into account Equation (\ref{June1}), we have that in  case ${\rm dim}(F_1) \geq 4$ then

$${{\rm Aut}}(F_1)=
\left\{\left(\begin{array}{lllll}
\epsilon&0&0&\dots&0\\
0&b&0&\dots&0\\
0&b_3&\epsilon b&\dots&0\\
0&b_4&\epsilon b_3&\dots&0\\
\vdots&\vdots&\vdots&\ddots&\vdots\\
a_n&b_n&\epsilon b_{n-1}&\dots&\epsilon^{n-2} b

\end{array}\right):
a_n, b_3,...,b_n \in \mathbb{C},  b\in \mathbb{C}^{*}; \epsilon^{n-3}=1  \right\}.$$

 From here  a maximal torus is:
$$\mathcal{T}=\left\{t_{1,b}:=\left(\begin{array}{ccccc}
1&0&0&\dots&0\\
0&b&0&\dots&0\\
0&0&b&\dots&0\\
\vdots&\vdots&\vdots&\ddots&\vdots\\
0&0&0&\dots&b

\end{array}\right):b\in \mathbb{C}^{*}\right\} \cong \mathbb{C}^{*}.$$

Hence, the normalizer of $\mathcal{T}$ in ${{\rm Aut}}(F_1)$ is

$$\mathcal{N}(\mathcal{T})=\left\{\left(\begin{array}{lllll}
\epsilon&0&0&\dots&0\\
0&b&0&\dots&0\\
0&b_3&\epsilon b&\dots&0\\
0&b_4&\epsilon b_3&\dots&0\\
\vdots&\vdots&\vdots&\ddots&\vdots\\
0&b_n&\epsilon b_{n-1}&\dots&\epsilon^{n-2}b

\end{array}\right):b_3,...,b_n \in \mathbb{C},  b\in \mathbb{C}^{*}; \epsilon^{n-3}=1  \right\}.$$

Observe that $\mathcal{T} \subsetneqq \mathcal{N}(\mathcal{T})$ and so any grading of $F_1$ is not necessarily a toral one.

\medskip

Let us denote by $A$ any (non-Lie) one-parametric filiform Leibniz  with $\theta\neq 0$ and with ${\rm dim}(A) \geq 3$
(the cases ${\rm dim}(A) =1,2$ give algebras with zero product).

\medskip

 Recall now that any grading of   $A$  is induced by an abelian subgroup $\mathcal{G}$ of diagonalizable automorphisms in $\mathcal{N}(\mathcal{T})$, being the homogeneous components of the grading the common eigenspaces of the elements in $\mathcal{G}.$

Since for any $f \in \mathcal{G}$ (and following from the multiplication table of the algebra), $e_1$ and $e_n$ are eigenvectors of $f$, then $e_1$ and $e_n$ are homogeneous vectors in any grading of $A$. That is,
\begin{equation}\label{queen1}
\hbox{$e_1 \in A_g$ and $e_n \in A_h$}
\end{equation}

 for some $g,h \in {G}$, where $G$ denotes the abelian group in the grading of $A$:
\begin{equation}\label{June2}
A=\bigoplus\limits_{g \in G} A_g.
\end{equation}

\medskip

If $A$ is three-dimensional,  we know by Equation (\ref{queen1}) that $e_1 \in A_g$ and $e_3 \in A_h$. We are going to distinguish two possibilities:

\medskip

First, $g=h$. In this case either $A=A_g$ when $e_2 \in A_g$ or $A=A_g \oplus A_t$ with $t \neq g$ when $e_2 \notin A_g$. In the first case we get the trivial grading $A=A_0$. In the second case we can write $e_2=v_g+ v_t$ with $v_g \in A_g$ and $0\neq v_t \in A_t$. If $v_g=0$ then $e_2 \in A_t$ and  we have the ${\mathbb Z}_2$-grading
 \begin{equation}\label{queen4}
 A= \langle  e_2 \rangle_{\overline{0}} \oplus \langle e_1, e_3 \rangle_{\overline{1}}.
 \end{equation}
If $v_g \neq 0$  then
the fact $e_1e_2= \theta e_3$ and the grading of $A$ give us $2g=g$ and $g+t=g$. From here $g=0$ and so $t=0$. Hence $g=t$, a contradiction, and so this case does not happen.

\medskip

Second, $g \neq h$. In this case either $A=A_g \oplus A_h$ when $e_2 \in A_g\oplus A_h$ or $A=A_g \oplus A_h \oplus A_t$ with $t \notin \{g,h\}$  when $e_2 \notin A_g\oplus A_h$.

 Consider the case in which $e_2 \in A_g\oplus A_h$ and write $e_2=v_g+v_h$ with $v_g \in A_g$ and  $v_h \in A_h$.  If $v_g=0$ we get $e_2=e_h$ and the  the ${\mathbb Z}_2$-grading
\begin{equation}\label{queen5}
A= \langle  e_1 \rangle_{\overline{0}} \oplus \langle e_2, e_3 \rangle_{\overline{1}}.
\end{equation}
If $v_h=0$ then $e_2=v_h$ and we obtain the the ${\mathbb Z}_2$-grading

 \begin{equation}\label{queen6}
 A= \langle  e_3 \rangle_{\overline{0}} \oplus \langle e_1, e_2 \rangle_{\overline{1}}.
 \end{equation}
 Finally if $v_g \neq 0$ and $v_h \neq 0$, the fact $e_1e_2= \theta e_3$ and the grading of $A$ give us $2g=h$ and $g+h=h$. From here $g=h=0$. This is a contradiction  and so this case does not happen.

 Consider now the case in which $e_2 \notin A_g\oplus A_h$. Then we can write $e_2 = v_g+v_h+v_t$ with $v_i \in A_i$ for $i \in \{g,h,t\}$ and $v_t \neq 0$. If $v_g=v_h=0$ then $e_2 =v_t$ and we get the ${\mathbb Z}$-grading
  \begin{equation}\label{queen7}
 A= \langle  e_1 \rangle_{1} \oplus \langle e_2 \rangle_{2}\oplus \langle e_2 \rangle_{3}.
 \end{equation}
 If $v_g \neq 0$ we get as above that $2g=h$ and that $g+t=h$. From here $t=g$ a contradiction. If $v_h \neq 0$ we have in a similar way  $g+h=h$ and $g+t=h$. From here $h=t$. A contradiction. Hence these last cases do not occur.

Since the grading (\ref{queen7}) has three homogeneous components while the remaining non-trivial gradings have only two homogeneous components  we have that the grading (\ref{queen7})is not equivalent to any grading in (\ref{queen4})-(\ref{queen6}). The grading (\ref{queen4}) is not equivalent to grading (\ref{queen6}). Indeed, in the opposite case  we would have an automorphism $\phi$ of $A$ satisfying
$\phi(\langle e_2\rangle)= \langle e_3 \rangle$ and $\phi(\langle e_1, e_3 \rangle)=\langle e_2, e_3 \rangle$. But in this case $0 \neq  \phi(e_3)=\phi(e_2) \phi(e_1)=0$, a contradiction. In a similar way, gradings (\ref{queen5}) and (\ref{queen6}) are not equivalent. Finally, an analogous argument gives us that gradings (\ref{queen4}) and (\ref{queen5}) are equivalent if and only if $\theta \in \{\pm 1\}$.


Let us summarize our results in the following statement.

\begin{proposition}
Any abelian group grading of a one-parametric  filiform  Leibniz algebra $A$ (with a nonzero parameter) of dimension $3$ is equivalent to the   trivial grading or to one of the  gradings (\ref{queen4}), (\ref{queen5}), (\ref{queen6}) or (\ref{queen7}) if $\theta \notin \{\pm1\}$; or to the trivial grading or to one of the  gradings (\ref{queen4}),  (\ref{queen6}) or (\ref{queen7}) if $\theta \in \{\pm1\}.$

\end{proposition}


Below we will consider the case  of the algebra $A$ for   $n \geq 4$.

Now, since $e_1e_2=\theta e_n$ with $\theta \neq 0$, we can write by Equation (\ref{June2}) that $$e_2=v_{g_1}+v_{g_2}+\cdots+v_{g_m}$$ with any $v_{g_i}\neq 0$ and
$g_i\neq g_j$ when $i \neq j$.
From here $e_1(v_{g_1}+v_{g_2}+\cdots+v_{g_m})=\theta e_n$ and,
by distinguish removing the zero and the non-zero products, we can write
$$\sum\limits_{j\in J} e_1v_{g_j}+\sum\limits_{k\in K} e_1v_{g_k}=\theta e_n,  \ J \cap K= \emptyset,$$ with $e_1v_{g_j}=0$ for any $j \in J$ and $e_1v_{g_k} \neq 0$ for any $k \in  K$. Since $e_1 \in A_g$, then the fact $\sum\limits_{k\in K} e_1v_{g_k}=\theta e_n$ implies that by writing for any $k \in K$,
$0 \neq e_1 v_{g_k}=v_{g_k+g} \in A_{g_k + g}$, we have that $0\neq \sum\limits_{k\in K} v_{g_k+g}=\theta e_n \in A_h.$ Hence by the grading, we get that $g_k+g =h$ for any $k \in K$.
That is, $g_k=h-g$ for any $k \in K$. From here we have that the cardinal of $K$ is necessarily 1 and we write
$$e_2=\sum\limits_{j\in J} v_{g_j} + v_{h-g}$$ where  any $v_{g_j} \in A_{g_j}$  with $e_1v_{g_j}=0$ and $0\neq v_{h-g} \in A_{h-g}$ with $e_1v_{h-g}\neq 0$.

Now, since $e_2e_1=e_3$, we have $\sum\limits_{j\in J} v_{g_j}e_1 + v_{h-g}e_1=e_3$. In case $v_{h-g}e_1=0$, then by the product in $A$ we get
$0 \neq v_{h-g}= \alpha e_1+ \beta e_n$, $\alpha, \beta \in {\mathbb C}$. But  $v_{h-g}=e_2-\sum\limits_{j\in J} v_{g_j}$ and so $e_1v_{h-g}=e_1e_2=e_3\neq 0$. However in this case
$e_1v_{h-g}=e_1( \alpha e_1+ \beta e_n)=0$ which is a contradiction (recall $n \geq 4$). Hence $v_{h-g}e_1\neq 0$ and we can write
$0 \neq v_{h-g}e_1:=v_h \in A_h.$ We get $$e_3= \sum\limits_{j\in J} v_{g_j+g} + v_h$$
with $v_h \neq 0$.

\medskip

We have that $v_he_1\neq 0$. Indeed, if $v_he_1= 0$ then $v_h=\alpha e_1 + \beta e_n$ with $\alpha \neq 0$ or $\beta \neq 0$. But
$0 \neq v_h=v_{h-g}e_1$ and so (by the product in $A$),
$\alpha =0$, $\beta \neq 0$ and $v_{h-g}= \tau e_1+ \beta e_{n-1} + \gamma e_n$. Recall from the above that $e_1v_{h-g} \neq 0$ but $e_1( \tau e_1+ \beta e_{n-1} + \gamma e_n)=0$, a contradiction.
From here $v_he_1 \neq 0$ and by denoting $0 \neq v_he_1:=v_{h+g} \in A_{h+g}$ we have that $e_4=e_3e_1=(\sum\limits_{j \in J} v_{g_j+g}+ v_h) e_1=
  \sum\limits_{j \in J} v_{g_j+2g}+ v_{h+g}$ with $v_{h+g} \neq 0$. That is, we can assert  $$e_4=\sum\limits_{j \in J} v_{g_j+2g}+ v_{h+g}$$ with $v_{h+g} \neq 0$.

 By arguing in this way get  for any $k \in \{2,...,n-1\}$ that
 $$e_k=\sum\limits_{j \in J} v_{g_j+(k-2)g}+ v_{h+(k-3)g}$$ with $v_{h+(k-3)g} \neq 0$.

 Finally, for $k=n$ we have that the fact $e_{n-1}e_1=e_n$ gives us
 $e_n=e_k=\sum\limits_{j \in J} v_{g_j+(n-2)g}+ v_{h+(n-3)g}$ with $v_{h+(n-3)g} \neq 0$ and $e_n \in A_h$. From here, we get by the grading that
   $h+(n-3)g=h$ and so (recall $n \geq 4)$,
  \begin{equation}\label{air}
 (n-3)g=0.
 \end{equation}

 We have by the above that $(n-2)g=g$. So  in case some $ v_{g_j+(n-2)g}\neq 0$ we have  $ v_{g_j+(n-2)g} \in A_{g_j+g}=A_h$ and so $g_j=h-g$. Summarizing we have (take into account that the fact $(n-3)g=0$ implies $(n-4)g=-g$).
 \begin{equation}\label{serkan}
 e_1 \in A_g, \hspace{0,25cm}e_2 \in A_{h-g},\hspace{0,25cm}e_3 \in A_{h},\hspace{0,25cm}e_4 \in A_{h+g}, \cdots , e_k \in A_{h+(k-3)g}, \cdots, e_{n-2} \in A_{h+(n-5)g},
 \end{equation}
 $$e_{n-1} \in A_{h+(n-4)g}=A_{h-g},\hspace{0,25cm}e_n \in A_{h}.$$

Let us distinguish two cases:


First, $g=0$. Then, by Equation (\ref{serkan}),  we have either the trivial grading $$A=A_0$$ in case $h=0$ or the ${\mathbb Z}_2$-grading
\begin{equation}\label{inicio}
 A=\langle e_1 \rangle_{\overline{0}}\oplus
\langle e_2,e_3,...,e_n \rangle_{\overline{1}}
\end{equation}

 when $h \neq 0$.

Second, $g \neq 0$. Observe that ${\rm dim}(A) \neq 4$ since in this case Equation (\ref{air}) would give us $g=0$.

 Let us denote by $2\leq t \leq n-3$ the order of $g$. By Equation (\ref{air}),  $t$ divides $n-3$ and so we can write
\begin{equation}\label{selin}
 {n-3}=rt
 \end{equation}

 with $t,r \in {\mathbb N}$ being  $2\leq t\leq n-3$  and   $1 \leq r \leq n-4$.


 If furthermore  $h=g$, Equation (\ref{serkan}) gives us the following grading,
 which will be denote as
 \begin{equation}
     \label{eda}
 A_g  =\langle e_1 \rangle \oplus \langle e_3,e_{t+3},e_{2t+3},...,e_{(r-1)t+3} \rangle \oplus \langle e_n \rangle
 \end{equation}
\begin{longtable}{rcl} $A_{2g}$& $=$ & $ \langle e_4,e_{t+4},e_{2t+4},...,e_{(r-1)t+4} \rangle$\\
 $A_{3g}$ & $=$ &$ \langle e_5,e_{t+5},e_{2t+5},...,e_{(r-1)t+5}\rangle $\\
  & $\vdots$&\\
$A_{(t-1)g}$ & $=$ & $ \langle e_{t+1},e_{2t+1},e_{3t+1}, ...,e_{rt+1} \rangle $\\
$A_{0}$ & $=$ & $\langle e_{t+2},e_{2t+2},e_{3t+2},...,e_{rt+2} \rangle \oplus \langle e_2 \rangle$
\end{longtable}

From here, for any divisor $t \neq 1$ of $n-3$ we have a ${\mathbb Z}_t$-grading of $A$  by taking ${\overline i}:=ig$, $i \in \{0,...,t-1\}$ in Equation (\ref{eda}) and where
$rt=n-3$.


We note that $pg \neq qg$ for $p,q \in \{0,...,t-1\}$, $p \neq q$,  in Equation (\ref{eda}) since in the opposite case $(p-q)g=0$ with $p-q \leq t-1$, a contradiction with the order $t$ of $g$.

We also note that for $t=2$, Equation (\ref{eda}) means the ${\mathbb Z}_2$-grading of $A$:
\begin{longtable}{rcl}
$A_{\overline{0}}$ & $=$ & $\langle e_{4},e_{6},e_{8},...,e_{2(r+1)} \rangle \oplus \langle e_2 \rangle$\\
$A_{\overline 1}$ & $=$ & $\langle e_1 \rangle \oplus \langle e_3,e_{5},e_{7},...,e_{2r+1} \rangle \oplus \langle e_n \rangle.$
\end{longtable}


It remains to study the case in which $h \neq g$ (and also $g \neq 0$). In this case Equation (\ref{serkan})  gives us
\begin{equation}\label{casi}
 A_g=\langle e_1 \rangle
 \end{equation}
\begin{longtable}{rcl}
$A_{h}$ & $=$&$\langle e_3,e_{t+3}, e_{2t+3},...,e_{(r-1)t+3} \rangle \oplus \langle e_n \rangle $\\
$A_{h+g}$ & $=$&$ \langle e_4,e_{t+4}, e_{2t+4},...,e_{(r-1)t+4} \rangle$\\
$A_{h+2g}$ & $=$ & $\langle e_5,e_{t+5}, e_{2t+5},...,e_{(r-1)t+5} \rangle$\\
  &$\vdots$&\\
$A_{h+(t-2)g}$& $=$& $ \langle e_{t+1},e_{2t+1}, e_{3t+1},...,e_{rt+1} \rangle$\\
$A_{h+(t-1)g}$ & $=$ & $\langle e_2 \rangle \oplus \langle e_{t+2},e_{2t+2}, e_{3t+2},...,e_{rt+2} \rangle. $
\end{longtable}

We have as above that $h+pg \neq h+qg$ for $p,q \in \{0,...,t-1\}$. However it is possible that
$g=h+pg$ for some $p \in \{1,...,t-1\}$. Hence we consider two possibilities:

\bigskip

First, $g\neq h+pg$ for any $p \in \{1,...,t-1\}$. Then we get  that  for any divisor $t \neq 1$ of $n-3$ we have a ${\mathbb Z}_t \times {\mathbb Z}$-grading of $A$ given by Equation (\ref{casi}) where $({\overline 1},0):=g$,
   $({\overline i},1):=h+ig$   for   $i \in \{0,...,t-1\}$, and where
$rt={n-3}$.

We note that for $t=2$, Equation (\ref{casi}) means the ${\mathbb Z}_2 \times {\mathbb Z}$-grading of $A$:

\begin{longtable}{rcl}
$A_{({\overline 1},0)}$ & $=$ & $\langle e_1 \rangle$\\
$A_{({\overline 0},1)}$ & $=$ & $\langle e_3,e_{5}, e_{7},...,e_{2r+1} \rangle \oplus \langle e_n \rangle $\\
$A_{({\overline 1},1)}$ & $=$ & $\langle e_2 \rangle \oplus \langle e_{4},e_{6}, e_{8},...,e_{2(r+1)} \rangle. $
\end{longtable}

\bigskip

Second, $g=  h+pg$ for some $p \in \{1,...,t-1\}$. This fact is equivalent to $h=-\tau g$ for some $\tau \in \{0,...,t-2\}$. In this case $$A_g=A_{h+ (\tau +1) g}$$ and, by looking at Equation (\ref{casi}),  we obtain the grading

\begin{equation}\label{ultima}
 A_g=\langle e_1 \rangle \oplus \langle e_{\tau+4},e_{t+\tau+4},...,e_{(r-1)t+\tau+4}\rangle
 \end{equation}
\begin{longtable}{rcl}
$A_{-\tau g}$ & $=$&$\langle e_3,e_{t+3}, e_{2t+3},...,e_{(r-1)t+3} \rangle \oplus \langle e_n \rangle $\\
$A_{(-\tau +1) g}$ & $=$&$ \langle e_4,e_{t+4}, e_{2t+4},...,e_{(r-1)t+4} \rangle$\\
$A_{(-\tau +2) g}$ & $=$ & $\langle e_5,e_{t+5}, e_{2t+5},...,e_{(r-1)t+5} \rangle$\\
  &$\vdots$&\\
  $A_{0}$ & $=$ & $\langle e_{\tau +3}, e_{t+\tau +3},...,e_{(r-1)t+\tau +3} \rangle$\\
  &$\vdots$&\\
$A_{(t-\tau-2)g}$& $=$& $ \langle e_{t+1},e_{2t+1}, e_{3t+1},...,e_{rt+1} \rangle$\\
$A_{(t-\tau-1)g}$ & $=$ & $\langle e_2 \rangle \oplus \langle e_{t+2},e_{2t+2}, e_{3t+2},...,e_{rt+2} \rangle. $
\end{longtable}

That is,  for any divisor $t \neq 1$ of $n-3$ and for any $\tau \in \{0,...,t-2\}$ we have a ${\mathbb Z_t}$-grading of $A$ given by Equation (\ref{ultima}) where  $A_{{\bar i}}:=A_{ig}$ for any ${\bar i} \in {\mathbb Z_t}$ and where $rt={n-3}$.

\begin{teo}\label{teo10}
Let $A$ be a one-parametric  filiform Leibniz algebra $A$ of dimension $n$.
\begin{itemize}
\item[(i)] If $n=4$ then any abelian group grading of $A$ is equivalent   either to the trivial grading or to the  grading (\ref{inicio}).
\item[(ii)]  If  $n \geq 5$  then any abelian group grading of $A$ is equivalent    either to the trivial grading or to the  grading (\ref{inicio}) or to one of the gradings in   (\ref{eda})  or  to one of the gradings in (\ref{casi})  or to one of the gradins in (\ref{ultima}).

\end{itemize}
\end{teo}

\begin{proof}

We only have to show that,  for $n \geq 4$, the grading (\ref{inicio}), any grading in  (\ref{eda}),  any grading in  (\ref{casi}) and any grading in  (\ref{ultima}) are not equivalent.  To do that, observe that the grading
(\ref{inicio}) has one homogeneous component with dimension 1 and one homogeneous component with dimension $n-1$.

Fixed now a divisor $t\geq 2$ of $n-3$ and by denoting $rt={n-3}$, we have a grading given by (\ref{eda}) which has
   one homogeneous component with dimension $r+1$, one homogeneous components with dimension $r+2$ and $t-2$ homogeneous components with dimension $r$;  a grading  (\ref{casi}) which has
  one homogeneous component with dimension $1$, two  homogeneous components with dimension $r+1$ and $t-2$ homogeneous components with dimension $r$; and  a grading  (\ref{ultima}) with either three  homogeneous components with dimension $r+1$ and $t-3$ homogeneous components with dimension $r$; or with one  homogeneous component with dimension $r+2$, one  homogeneous component with dimension $r+1$ and $t-2$ homogeneous components with dimension $r$

  From here, the grading (\ref{inicio}) is  equivalent neither  to any grading in (\ref{eda}) nor  any grading in  (\ref{casi}) and nor  any grading in  (\ref{ultima}) (they have a different number of nonzero homogeneous components). Also,  two different gradings in (\ref{eda}), two different gradings in (\ref{casi}) and two different gradings in (\ref{ultima}) are not equivalent because they have nonzero homogeneous components with different dimensions.

  Now, let us  show that any grading in (\ref{eda}) is not equivalent to any grading in (\ref{casi}). To do that, suppose there exists $t,t' \geq 2$ two divisors of $n-3$ such that the ${\mathbb Z}_t$-grading of $A$:
 \begin{equation}\label{omar1}
 A_{\overline{0}}= \langle e_{t+2},e_{2t+2},e_{3t+2},...,e_{rt+2} \rangle \oplus \langle e_2 \rangle
 \end{equation}
\begin{longtable}{rcl}
$A_{\overline{1}}$ & $=$ & $\langle e_1 \rangle \oplus \langle e_3,e_{t+3},e_{2t+3},...,e_{(r-1)t+3} \rangle \oplus \langle e_n \rangle$\\
$A_{\overline{2}}$ & $=$ & $\langle e_4,e_{t+4},e_{2t+4},...,e_{(r-1)t+4} \rangle$\\
$A_{\overline{3}}$ & $=$ & $\langle e_5,e_{t+5},e_{2t+5},...,e_{(r-1)t+5}\rangle $\\
  &$\vdots$& \\
$A_{\overline{t-1}}$ & $=$ & $\langle e_{t+1},e_{2t+1},e_{3t+1}, ...,e_{rt+1} \rangle $
\end{longtable}
and the ${\mathbb Z}_{t'} \times {\mathbb Z}$-grading of $A$
 \begin{equation}\label{omar2}
 A_{({\overline 1},0)}=\langle e_1 \rangle
 \end{equation}
\begin{longtable}{rcl}
$A_{({\overline 0},1)}$ & $=$ & $\langle e_3,e_{t'+3}, e_{2t'+3},...,e_{(r'-1)t'+3} \rangle \oplus \langle e_n \rangle $\\
$A_{({\overline 1},1)}$ & $=$ & $\langle e_4,e_{t'+4}, e_{2t'+4},...,e_{(r'-1)t'+4} \rangle$\\
$A_{({\overline 2},1)}$ & $=$ & $\langle e_5,e_{t'+5}, e_{2t'+5},...,e_{(r'-1)t'+5} \rangle$\\
  &$\vdots$& \\
$A_{(\overline{t'-2},1)}$ & $=$ & $ \langle e_{t'+1},e_{2t'+1}, e_{3t'+1},...,e_{r't'+1} \rangle$\\
$A_{(\overline{t'-1},1)}$ & $=$ & $ \langle e_2 \rangle \oplus \langle e_{t'+2},e_{2t'+2}, e_{3t'+2},...,e_{r't'+2} \rangle $
\end{longtable}
  are equivalent, where $rt={n-3}$ and $r't'={n-3}$.
Since both gradings must have the same number of nonzero homogeneous components, then  necessarily
   \begin{equation}\label{omar4}
  t=t'+1.
  \end{equation}

  \medskip

  As the grading (\ref{omar2}) has an homogeneous  component with dimension 1, then necessarily $r=1$ and so $t=n-3$ in the grading (\ref{omar1}).   Then we have  by Equation (\ref{omar4}) that $t'=n-4$. Since $t'$ divides $n-3$ then necessarily $n-3=2$ and $t'=1$, but this contradicts the fact that $t \geq 2$.  We conclude that
  the gradings  in (\ref{omar1}) and in (\ref{omar2}) are not equivalent.

  \medskip

  Similar arguments allow us to verify that  any grading in (\ref{eda}) is not equivalent to any grading in (\ref{ultima}); and that   any grading in (\ref{casi}) is not equivalent to any grading in (\ref{ultima}).

\end{proof}


\color{black}


\end{document}